\documentclass{amsart}
\usepackage{amsmath,amsthm,amsfonts,amssymb,amscd,systeme}
\usepackage{tikz,tikz-cd}
\usepackage{empheq}
\usepackage{lipsum}
\usepackage{lastpage}
\usepackage{thmtools}
\usepackage{enumerate}
\usepackage[shortlabels]{enumitem}
\usepackage{fancyhdr}
\usepackage{mathrsfs}
\usepackage{xcolor}
\usepackage{graphicx}
\usepackage{mathtools}
\usepackage{listings}
\usepackage{bbm}

\usepackage{hyperref}
\usepackage[makeroom]{cancel}
\usepackage[utf8]{inputenc}

\DeclareMathAlphabet\mathbfcal{OMS}{cmsy}{b}{n}
\DeclareMathOperator{\Sl}{SL}
\DeclareMathOperator{\Gl}{GL}
\DeclareMathOperator{\Span}{Span}

\DeclareMathOperator{\Li}{Li}

\DeclareMathOperator{\lcm}{lcm}
\DeclareMathOperator{\Sym}{sym}

\DeclareMathOperator{\Tr}{Tr}

\setlist[enumerate]{leftmargin=*,widest=0}

\newcommand{\legendre}[2]{\left(\hspace{-1pt}\frac{#1}{#2}\hspace{-1pt}\right)}

\newtheorem{theorem}{Theorem}[section]
\newtheorem{lemma}[theorem]{Lemma}
\newtheorem{Question}{Question}
\theoremstyle{definition}
\newtheorem{definition}{Definition}[section]

\newtheorem{example}[theorem]{Example}

\newtheorem{conjecture}[theorem]{\bf Conjecture}

\theoremstyle{remark}
\newtheorem{remark}{Remark}[section]

\numberwithin{equation}{section}

\begin{document}

\title[Rational spanning sets of level-one cusp forms]{Rational spanning sets of level-one cusp forms from Eisenstein series at prime levels  }

%    Information for first author
\author{Tianyu Ni}
%%    Address of record for the research reported here
\address{School of Mathematical and Statistical Sciences,
Clemson University,
Clemson, SC 29634-0975,
USA}

\email{tianyuni1994math@gmail.com}
%    \thanks will become a 1st page footnote.
%\thanks{The first author was supported in part by NSF Grant \#000000.}

%    Information for second author
%\author{Author Two}
%\address{Mathematical Research Section, School of Mathematical Sciences,
%Australian National University, Canberra ACT 2601, Australia}
%\email{two@maths.univ.edu.au}
%\thanks{Support information for the second author.}

%    General info
\subjclass[2020]{11F11, 11F67}

%\date{January 1, 2001 and, in revised form, June 22, 2001.}

%\dedicatory{This paper is dedicated to our advisors.}

\keywords{rational bases; twisted $L$-values; Eisenstein series; Rankin-Cohen brackets}

\begin{abstract}
Let $S_k$ be the space of cusp forms of weight $k$ for the full modular group $\Sl_2(\mathbb{Z})$. In this paper, we will exhibit an explicit rational spanning subset of $S_k$ constructed from Eisenstein series at prime levels. The main ingredient of our proof is to show a result on determining cusp forms by noncentral quadratic-twist
$L$-values.
\end{abstract}

\maketitle

%\tableofcontents
\section{Introduction}
%Let $k\geq2$ be an integer. For $N\geq1$ and a Dirichlet character $\chi$ mod $N$, let $M_k(N,\chi)$ and $S_k(N,\chi)$ be the space of modular forms and cusp forms of weight $k$, level $N$ and nebentypus $\chi$, respectively. We write $M_k(N)$ and $S_k(N)$ if $\chi$ is trivial, and use $M_k$ and $S_k$ to denote $M_k(1)$ and $S_k(1)$. 

For an even integer $k\ge4$, let $M_k$ be the space of modular forms of weight $k$ and level one, and $S_k$ be the corresponding subspace of cusp forms. One interesting fact about modular forms is that the spaces of modular forms can be spanned by forms with rational Fourier coefficients that are of arithmetical interest - for example, the sum of powers of divisors. For even $l\geq4$, the Eisenstein series $E_l$ of weight $l$ is:
\begin{align*}
    E_l(z)=1-\frac{2l}{B_l}\sum_{n=1}^\infty\sigma_{l-1}(n)q^n
\end{align*}
where $B_l$ denotes the $l$-th Bernoulli number, $\sigma_{l-1}(n)=\sum_{d\mid n}d^{l-1}$, and $q=e^{2\pi i z}$ with $z$ in the upper half plane. It is well-known that $M_k$ has the following basis: $E_4^{\alpha}E_6^{\beta}$, where $\alpha,\beta\in\mathbb{Z},\alpha,\beta\geq0$, and $4\alpha+6\beta=k$; see for example \cite[p.~19]{Steinbook}. There is a simple rational spanning subset of $S_k$ :
\begin{align}S_{k}=\mathop{\Span}_{\substack{4\leq l \leq k-4}}(E_{l}\cdot E_{k-l}-E_k)\label{eq:classicalresult},\end{align}
which can be proved by Eichler-Shimura theory \cite{Eichlerperiod, Kohnen1984, Manin1973} on periods of modular forms and the Rankin-Selberg convolution. Finding an explicit basis of $S_k$ consisting of the forms above is rather difficult, and the only result that we are aware of is that of Fukuhara \cite[Theorem 1.2]{Fukuharabasis}.  See also \cite[p.~3]{oddperiods} and \cite[p.~138]{evenperiods} for related general conjectures. Furthermore,  recent works \cite{XueRankin-Cohen, Explicitgenerator2026} give new explicit spanning sets of $S_k$ consisting of Rankin-Cohen brackets of Eisenstein series. One common feature of the aforementioned constructions is that they are built from level-one Eisenstein series of lower weights. It is therefore natural to ask the following question:
\begin{Question}
    Can we construct an explicit rational spanning subset of $S_k$ using only Eisenstein series at higher levels?
\end{Question} 

The main goal of this paper is to give an affirmative answer to the question above. Our method is to take the Rankin-Cohen brackets of Eisenstein series (newforms) at higher levels and trace them down to the level-one space. The idea of this type of construction comes from Kohnen-Zagier \cite[p.~185]{Kohnen-Zagier1981} and Gross-Zagier \cite[p.~271]{GZ-86}, which have recently been applied to various number-theoretic problems; see, for example, \cite{kayath2024subspacesspannedeigenformsnonvanishing}, \cite{twistedperiod}, and \cite{ramanujantauni2026}. Let us first introduce some notation below.
%The Rankin–Cohen bracket of two modular forms \cite{Rankin1956,Cohen'smodularformC_k}, generalizing the product of two modular forms, produces a cusp form. 
\begin{definition}Let $f(z)\in M_{a}(N,\nu_1)$ and $g(z)\in M_{b}(N,\nu_2)$ be modular forms of level $N$, weights $a$ and $b$ and nebentypus $\nu_1$ and $\nu_2$, respectively. For an integer $e\geq1$, the $e$-th Rankin-Cohen bracket is given by
\begin{align*}
    [f(z),g(z)]_e := \sum_{r=0}^e (-1)^r\binom{e+a-1}{e-r}\binom{e+b-1}{r}f(z)^{(r)}g(z)^{(e-r)},
\end{align*}
where $f(z)^{(r)}$ is the $r$-th normalized derivative $f(z)^{(r)}:=\frac{1}{(2\pi i)^r}\frac{d^r f(z)}{dz^r}$ of $f$.  Moreover, $[f,g]_e\in S_{a+b+2e}(N,\nu_1\nu_2)$; see \cite[Theorem 7.1]{Cohen'smodularformC_k}. 
%Note that the definition in Zagier \cite[(73)]{Zagier1976} is related to the above through $F_{e}^{(a,b)}(f(z),g(z))= (-2\pi i)^e e![f(z),g(z)]_e$.
\end{definition}
\begin{definition} Let $p$ be an odd prime and $\chi_p=\legendre{\cdot}{p}$. For an integer $k\geq3$ such that $\chi_p(-1)=(-1)^k$, we have the Eisenstein series (see e.g., \cite[p.~185]{Kohnen-Zagier1981})
\begin{align*}G_{k,\chi_p}(z)&=\sum_{n=0}^\infty \sigma_{k-1,\chi_p}(n)q^n\in M_k(p,\chi_p),\\
 \sigma_{k-1,\chi_p}(n):&=\begin{cases}
        \frac{1}{2}L(1-k,\chi_p)&n=0,\\
        \sum\limits_{d\mid n}\chi_p(d)d^{k-1}&n\geq1.\end{cases}\end{align*}
\end{definition} 
\begin{definition}[{\cite[p.~7]{ramanujantauni2026}}]
For $c\geq1$ and integers $k_1,k_2$ with the same parity such that $3\leq k_1<k_2$ and $\chi_p(-1)=(-1)^{k_1}$, we define 
\begin{align}
\mathcal{F}_{p,k_1,k_2,c}(z)=\Tr^p_1[G_{k_1,\chi_p}(z),G_{k_2,\chi_p}(z)]_c,\label{eq:defofF}
\end{align}
where $\Tr_1^p$  is the trace map 
%(see e.g. \cite[p.271]{GZ-86}) 
\begin{align*} 
    \Tr_1^p: M_{m}(p)\rightarrow M_m(1),\quad g\mapsto \sum_{\gamma\in\Gamma_0(p)\backslash\Gamma_0(1)}g|_m\gamma,
\end{align*}
and for $m\in\mathbb{R}$ and $\gamma=\begin{bsmallmatrix}
    a&b\\c&d
\end{bsmallmatrix}\in \Gl_2^+(\mathbb{R})$  the slash operator \cite[Theorem 7.1]{Cohen'smodularformC_k} is
\begin{align*} 
g(z)|_m\gamma=\det(\gamma)^{m/2} (cz+d)^{-m} g\left(\frac{az+b}{cz+d}\right).
\end{align*}
\end{definition}
\begin{remark}
The condition $k_1<k_2$  is to relate $\mathcal{F}_{p,k_1,k_2,c}$ with noncentral quadratic-twist $L$-values of normalized Hecke eigenforms by the Rankin-Selberg convolution; see Lemma \ref{lem:rankinselberg}. 
The Fourier coefficients of $\mathcal{F}_{p,k_1,k_2,c}$ are given as follows. 
Let $l\geq3$ be an integer, and let $\phi$ and $\psi$ be respectively Dirichlet characters 
modulo $D_1$ and $D_2$ such that  $D=D_1D_2$ and
$\phi(-1)\psi(-1)=(-1)^{l}$. Define \cite[p.~2]{ramanujantauni2026}
    \begin{align*}
   \sigma_{l-1,\phi,\psi}(n):&=\begin{cases}
        -L(1-l,\phi)L(0,\psi)&n=0,\\
\sum\limits_{\substack{d_1,d_2>0\\d_1d_2=n}}\phi(d_1)\psi(d_2)d_1^{l-1}&n\geq1.
    \end{cases}
    \end{align*}For $n\geq1$, let $a_{p,k_1,k_2,c}(n)$ be the $n$-th Fourier coefficient of $\mathcal{F}_{p,k_1,k_2,c}(z)$.   Then  we have (\cite[Proposition 4.3]{ramanujantauni2026})
\begin{align}
        a_{p,k_1,k_2,c}(n)&=\sum_{\substack{a_1,a_2\geq0\\a_1+a_2=n}}\hspace{-5pt}\sigma_{k_1-1,\chi_p,\mathbf{1}}(a_1)\sigma_{k_2-1,\chi_p,\mathbf{1}}(a_2) \cdot c_{a_1,a_2}\nonumber\\&\quad\quad+\chi_p(-1)p^{-c}\sum_{\substack{a_1,a_2\geq0\\a_1+a_2=np}}\hspace{-5pt}\sigma_{k_1-1,\mathbf{1},\chi_p}(a_1)\sigma_{k_2-1,\mathbf{1},\chi_p}(a_2) \cdot c_{a_1,a_2},\label{eq:fourco}\\
        c_{a_1,a_2}:&=\sum_{r=0}^c(-1)^ra_1^ra_2^{c-r}\binom{c+k_1-1}{c-r}\binom{c+k_2-1}{r}.\nonumber
    \end{align}
    In particular, $a_{p,k_1,k_2,c}(n)\in\mathbb{Q}$ for all $n\geq1$. The computation of $a_{p,k_1,k_2,c}(n)$ is rather delicate, see \cite[Section 4]{ramanujantauni2026} for details. 
\end{remark}

We can finally state our main result. For a positive integer $m$ and an integer $a$ such that $\gcd(a,m)=1$, let $\mathscr{P}_{m, a}$ denote the set of primes in the residue class $a\pmod m$.  Note that $|\mathscr{P}_{m, a}|=\infty$ by Dirichlet's theorem on arithmetic progressions.   

\begin{theorem}\label{thm:mainthm}
Let $\mathcal{F}_{p,k_1,k_2,c}(z)$ be defined as in \eqref{eq:defofF}. Then 
\begin{align*}
\mathop{\Span}_{p\in\mathscr{P}_{4,(-1)^{k_1}}}\mathcal{F}_{p,k_1,k_2,c}(z)= S_{k_1+k_2+2c}.
\end{align*}
\end{theorem}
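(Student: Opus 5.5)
Since each $\mathcal{F}_{p,k_1,k_2,c}$ lies in $S_k$ with $k=k_1+k_2+2c$, one inclusion is immediate. For the other, I will argue by duality with respect to the Petersson inner product. Suppose $f\in S_k$ is orthogonal to every $\mathcal{F}_{p,k_1,k_2,c}$ with $p\in\mathscr{P}_{4,(-1)^{k_1}}$. The goal is to show that $f=0$.

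Write $f=\sum_j \alpha_j f_j$ in the basis of normalized Hecke eigenforms $f_j$; these have real Fourier coefficients. The trace is adjoint to the inclusion $S_k\subset S_k(p)$, so
\[
\langle \mathcal{F}_{p,k_1,k_2,c}, f_j\rangle=\langle [G_{k_1,\chi_p},G_{k_2,\chi_p}]_c, f_j\rangle_{\Gamma_0(p)} .
\]
The Rankin--Selberg unfolding against the Eisenstein series $G_{k_2,\chi_p}$, together with Zagier's formula for Rankin--Cohen brackets, expresses the right side as an explicit nonzero constant times
\[
L(f_j,k-1)\,L(f_j\otimes\chi_p,k-k_1-c).
\]
This is the content of Lemma~\ref{lem:rankinselberg}, which I assume. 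The first factor lies in the region of absolute convergence, so $L(f_j,k-1)\neq0$. The hypothesis $k_1<k_2$ makes $k-k_1-c=k_2+c$ lie strictly to the right of the center $k/2$, so the twist is \emph{noncentral}. The condition $p\in\mathscr{P}_{4,(-1)^{k_1}}$ gives $\chi_p(-1)=(-1)^{k_1}$, since $\chi_p(-1)=(-1)^{(p-1)/2}$. This is exactly the parity needed for $G_{k_1,\chi_p}$ to exist.

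The orthogonality hypothesis then becomes
\[
\sum_j \overline{\alpha_j}\,c_j\,L(f_j\otimes\chi_p,s_0)=0 \qquad\text{for all } p\in\mathscr{P}_{4,(-1)^{k_1}},
\]
where $s_0=k_2+c$ and the $c_j\neq0$ are independent of $p$ (I must check this $p$-independence, including the powers of $p$, in the lemma). So the theorem reduces to the following determination result. If $g=\sum_j\beta_j f_j$ satisfies $L(g\otimes\chi_p,s_0)=0$ for all primes $p$ in the given residue class mod $4$, then $g=0$.

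To prove the determination result I will take a large parameter and average over $p$. The first step is to expand
\[
L(g\otimes\chi_p,s_0)=\sum_n \frac{a_g(n)\chi_p(n)}{n^{s_0}},
\]
multiply by $\chi_p(m)$, and sum over $p\le X$ in the progression. By Dirichlet's theorem, or the Siegel--Walfisz theorem, $\chi_p(n)$ with $n$ not a square averages to zero over primes. The main obstacle is the \emph{convergence} of this sum. If $s_0>(k+1)/2$ strictly, Deligne's bound gives absolute convergence. At the edge $s_0=(k+1)/2$ (when $k_2-k_1=1$, but the parities are equal, so $k_2-k_1\ge2$), the point $s_0$ lies at least $1$ to the right of the center, and the series converges absolutely. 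So I can interchange sums and use dominated convergence over $n$.

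Quadratic reciprocity turns $\chi_p(n)$ into a character in $p$ whose modulus divides $4n$, and the fixed residue class mod $4$ ensures it is nonprincipal unless $n$ is a square. Dividing by $\pi(X)$ and letting $X\to\infty$ then gives, for every squarefree $m$,
\[
\sum_{r\ge1} \frac{a_g(mr^2)}{(mr^2)^{s_0}}=0 .
\]
A Möbius-type inversion over $r$, using the Hecke relations for each $f_j$, shows that these vanishings for all squarefree $m$ force $\sum_j\beta_j a_j(m)\,L_j(m)=0$. Here $L_j(m)$ is a symmetric-square-type factor that depends only on $m$'s prime factorization, and I expect it to be nonzero. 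Carrying out this inversion cleanly and ensuring the nonvanishing is the delicate step.

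An alternative route is Sturm-type bounds. However, the simplest finish is via the squarefree coefficients: multiplicativity plus strong multiplicity one show that distinct eigenforms are distinguished by $a_j(m)$ on squarefree $m$. Linear independence of the functions $m\mapsto a_j(m)$ on squarefree $m$ then gives $\beta_j=0$ for all $j$, hence $f=0$.
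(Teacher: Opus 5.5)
Your outline matches the paper's: absorb the nonzero $j$-dependent Rankin--Selberg factor into a new form (Kohnen's trick), average over $p\equiv b\pmod 4$ at $s_0=k_2+c\ge k/2+1$ to get $\sum_{r\ge1}a_g(mr^2)(mr^2)^{-s_0}=0$ for every squarefree $m$, then finish with squarefree coefficients. The genuine gap is the step you defer. You end with $\sum_j\beta_j a_j(m)L_j(m)=0$ without knowing how $L_j(m)$ depends jointly on $j$ and $m$. Unless it splits as a nonzero function of $m$ alone times a nonzero constant depending on $j$ alone, linear independence of the functions $m\mapsto a_j(m)$ does not give $\beta_j=0$.

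The missing idea (the paper's symmetric-square lemma) is that the dependence does separate. For squarefree $m$ and each normalized eigenform $f_j$,
\[
\sum_{r\ge1}\frac{a_j(mr^2)}{(mr^2)^{s}}=A_{s,m,k}\,L(\Sym^2 f_j,2s)\,a_j(m),\qquad A_{s,m,k}=\frac{m^{-s}}{\zeta(4s+2-2k)}\prod_{p\mid m}\bigl(1+p^{k-1-2s}\bigr)^{-1},
\]
and $A_{s,m,k}\neq0$ is \emph{independent of $j$}. To see this, sum the Hecke relation $a_j(m)a_j(r^2)=\sum_{d\mid (m,r)}d^{k-1}a_j(mr^2/d^2)$ against $r^{-2s}$; it is valid because $m$ is squarefree. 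The divisor sum produces $\sum_{d\mid m}d^{k-1-2s}=\prod_{p\mid m}(1+p^{k-1-2s})$, and $\sum_r a_j(r^2)r^{-2s}=L(\Sym^2 f_j,2s)/\zeta(4s+2-2k)$. The factor $L(\Sym^2 f_j,2s_0)$ is nonzero because $2s_0\ge k+2>k$, where its Euler product converges absolutely. With this separation you pass to the single form $F=\sum_j\beta_jL(\Sym^2f_j,2s_0)f_j$, all of whose squarefree coefficients vanish, and then to $\beta_j=0$.

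For that last step the paper cites Anamby--Das. Your strong-multiplicity-one route also works, but not merely because distinct eigenforms are distinguished on squarefree $m$. Distinct multiplicative functions on squarefree integers can be linearly dependent: three that agree at every prime except one already are. What you need is that distinct eigenforms differ at infinitely many primes, so that Artin's trick can be run on squarefree $m$ coprime to an arbitrary finite set of primes.

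Two small corrections. First, the untwisted Rankin--Selberg factor is $L(f_j,k_1+k_2+c-1)$, not $L(f_j,k-1)$; it is still nonzero, since $k_1+k_2>3$ gives $k_1+k_2+c-1>(k+1)/2$. Second, your $p$-independence worry is harmless. The lemma's constant depends on $p$ but not on $j$, and the $j$-dependent factor does not depend on $p$, which is all the argument needs.
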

Before going on to sketch the idea of the proof, we make a couple of remarks. First, one of the major differences between our result and the classical one \eqref{eq:classicalresult} is that, here, the indices of the Eisenstein series are fixed, whereas the levels vary. Secondly, a natural question is whether we can find an explicit finite subset $\mathcal{A}$ of $\mathscr{P}_{4,(-1)^{k_1}}$ such that Theorem \ref{thm:mainthm} still holds if we replace $\mathscr{P}_{4,(-1)^{k_1}}$ by $\mathcal{A}$; see also the discussion in Section \ref{sect:twist}. We do not address this question in this paper, but we give some numerical evidence below.
\begin{example}
    If $\dim S_{k_1+k_2+2c}=1$, then $\mathcal{F}_{p,k_1,k_2,c}$ is not identically zero (see \cite[Proposition 3.4]{ramanujantauni2026}), and thus $$\mathbb{C}\mathcal{F}_{p,k_1,k_2,c}=S_{k_1+k_2+2c}.$$
    In particular, this will yield a family of identities that represent Ramanujan's tau function in terms of twisted divisor functions; see \cite{ramanujantauni2026} for details.
\end{example}
\begin{example}
We look at several specific examples when $\dim S_{k_1+k_2+2c}\geq2$. 
Suppose $k_1=4, k_2=6$, and $c=7$. Then $\dim S_{k_1+k_2+2c}=2$. The first two Fourier coefficients of $\mathcal{F}_{p,4,6,7}(z)$ for $p\leq 41$ and $p\equiv 1\pmod 4$ are given in the table \cite{codestrRC}:
\begin{center}
\begin{tabular}{ r  r  r}
 $p$ & $a_{p,4,6,7}(1)$ &  $a_{p,4,6,7}(2)$ \\ 
 \hline
 $5$ & $\frac{154423872}{15625}$  & $-\frac{159476041728}{15625}$\\  
 $13$ & $\frac{155209495337280}{62748517}$ & $-\frac{305452478268057600}{62748517}$\\
 $17$ & $\frac{16698011316480}{1419857}$& $\frac{60160316875776000}{1419857}$\\
 $29$ & $\frac{2769422659862126400}{17249876309}$& $-\frac{4330430663342138910720}{17249876309}$\\
 $37$ & $\frac{76398817748523912000}{94931877133}$ & $-\frac{166849186960968411801600}{94931877133}$ \\
 $41$ & $\frac{260537169091468492800}{194754273881}$ & $\frac{805039626557393707991040}{194754273881}$
\end{tabular}
\end{center}
%\begin{align*}
%\mathcal{F}_{5,4,6,7}(z)&=\frac{154423872}{15625}q-\frac{159476041728}{15625}q^2+O(q^3),
%\\
%\mathcal{F}_{13,4,6,7}(z)&=\frac{155209495337280}{62748517}q-\frac{305452478268057600}{62748517}q^2+O(q^3),\\
%\mathcal{F}_{17,4,6,7}(z)&=\frac{16698011316480}{1419857}q+\frac{60160316875776000}{1419857}q^2+O(q^3),\\
%\mathcal{F}_{29,4,6,7}(z)&=\frac{2769422659862126400}{17249876309}q-\frac{4330430663342138910720}{17249876309}q^2+O(q^3),\\
%\mathcal{F}_{37,4,6,7}(z)&=\frac{76398817748523912000}{94931877133}q-\frac{166849186960968411801600}{94931877133}q^2+O(q^3),\\
%\mathcal{F}_{41,4,6,7}(z)&=\frac{260537169091468492800}{194754273881}q+\frac{805039626557393707991040}{194754273881}q^2+O(q^3).
%\end{align*}
Let $\tilde{a}_{p,k_1,k_2,c}(n)=\frac{a_{p,k_1,k_2,c}(n)}{a_{p,k_1,k_2,c}(1)}$.
Then we can verify that $\tilde{a}_{p_1,4,6,7}(2)\neq\tilde{a}_{p_2,4,6,7}(2)$ for any two distinct primes $p_1$ and $p_2$ in the range above. 
It follows that $\mathcal{F}_{p_1,4,6,7}(z)$ and $\mathcal{F}_{p_2,4,6,7}(z)$  are linearly independent, and thus they span $S_{24}$.

We now suppose $k_1=5, k_2=11$, and $c=4$.  
The values of  $a_{p,5,11,4}(n)$ for $n=1,2$ and $p\leq 31$ and $p\equiv 3\pmod 4$ are given in the table \cite{codestrRC}: 
\begin{center}
\begin{tabular}{ r  r  r}
 $p$ & $a_{p,5,11,4}(1)$ &  $a_{p,5,11,4}(2)$ \\ 
 \hline
 $3$ & $-\frac{16720060}{27}$  & $\frac{2396996000}{9}$\\  
 $7$ & $-\frac{1554085189920}{343}$ & $-\frac{1385938815840000}{343}$\\
 $11$ & $-\frac{706056631667100}{1331}$& $\frac{359581263809856480}{1331}$\\
 $19$ & $-\frac{1096995469227879900}{6859}$& $\frac{422491308698238121440}{6859}$ \\
 $23$ & $-\frac{344508432474842687520}{279841}$ & $-\frac{261274980331859500742400}{279841}$ \\
 $31$ & $-\frac{25593156092907914817600}{923521}$ & $-\frac{22192227722719705312488960}{923521}$
\end{tabular}
\end{center}
Similarly, we have that $\tilde{a}_{p_1,5,11,4}(2)\neq\tilde{a}_{p_2,5,11,4}(2)$ for any two distinct primes $p_1$ and $p_2$ in the range above. 
Thus, $\mathcal{F}_{p_1,5,11,4}$ and $\mathcal{F}_{p_2,5,11,4}$  are linearly independent.

In general, for any $d\geq1$, we can study the linear independence of $\mathcal{F}_{p_i,k_1,k_2,c}$ ($1\leq i\leq d$) by checking the non-singularity of the  matrix formed by the first $d$ Fourier coefficients of each form: 
\begin{align*}
  \begin{pmatrix}
        a_{p_1,k_1,k_2,c}(1) & \cdots &a_{p_1,k_1,k_2,c}(d)\\ \vdots & \ddots &\vdots \\   a_{p_d,k_1,k_2,c}(1) & \cdots &    a_{p_d,k_1,k_2,c}(d)
    \end{pmatrix}.
\end{align*}
Unfortunately, we are unable to show that in the case $\dim S_{k_1+k_2+2c}=2$ for any two distinct $p_1$ and $p_2$ in the residue class $(-1)^{k_1}\pmod 4$, $\mathcal{F}_{p_1,k_1,k_2,c}$ and $\mathcal{F}_{p_2,k_1,k_2,c}$ form a basis of $S_{k_1+k_2+2c}$.  
Nevertheless, the numerical data \cite{codestrRC} suggest the following conjecture. 
\end{example}
\begin{conjecture}Let $d=\dim S_{k_1+k_2+2c}$. Then for any subset $\mathcal{S}\subseteq\mathscr{P}_{4,(-1)^{k_1}}$ with $|\mathcal{S}|=d$, we have 
\begin{align*}
\mathop{\Span}_{p\in\mathcal{S}}\mathcal{F}_{p,k_1,k_2,c}(z)= S_{k_1+k_2+2c}.
\end{align*}
\end{conjecture}

We now sketch the proof. The main idea can be regarded as a generalization of the proof of \eqref{eq:classicalresult}.   It suffices to show that if $g\in S_{k_1+k_2+2c}$ is orthogonal to  $\mathcal{F}_{p,k_1,k_2,c}(z)$ for all $p\in\mathscr{P}_{4,(-1)^{k_1}}$, then $g=0$. We use the Rankin-Selberg convolution and a trick from Kohnen \cite[Theorem 1]{Kohnen2005} to show that there is another cusp form $G_g$ associated to $g$ such that the quadratic-twist $L$-values $L(G_g,\chi_p,k_2+c)=0$ for all $p\in \mathscr{P}_{4,(-1)^{k_1}}$, and satisfies the property: if $G_g=0$, then $g=0$. To proceed with the proof, we need a result on determining a cusp form  by non-central quadratic-twist $L$-values, which is proved in Section \ref{sect:determine}.  Using this result, we obtain $G_g=0$, and thus $g=0$, completing the proof.

The paper is organized as follows. In Section \ref{sect:prelim}, we give all the preparatory results needed in later sections. In Section \ref{sect:determine}, we prove a result on determining cusp forms by noncentral quadratic-twist
$L$-values. We prove Theorem \ref{thm:mainthm} in Section \ref{sect:proof}. Lastly, in Section \ref{sect:twist}, we view our results within the framework of the theory of twisted periods of modular forms and propose questions for future work.

\section{Preliminaries}\label{sect:prelim}
\subsection{Twisted \texorpdfstring{$L$}{L}-values}
Let $f(z)=\sum_{n\geq1}a_f(n)q^n$ be a cusp form in $S_k$, not necessarily a Hecke eigenform. For every primitive Dirichlet character $\chi$ modulo $D$, one can associate  $f$ with a twisted $L$-function:
\begin{align*}
L(f,\chi,s)=\sum_{n=1}^{\infty}a_f(n)\chi(n)n^{-s},
\end{align*}
which converges absolutely for $\Re(s)>\frac{k+1}{2}$ by Deligne's bound $a_f(n)\ll_{f,\epsilon}n^{\frac{k-1}{2}+\epsilon}$. We can view $L(f,\chi,s)$ as the regular Hecke $L$-function $L(f\otimes\chi,s)$ of 
$(f\otimes \chi)(z):=\sum_{n\geq1}a_f(n)\chi(n)q^n$, where
$f\otimes\chi$ is an element of $S_k(D^2,\chi^2)$, see \cite[Proposition 14.19]{Iwaniecbook}.  
The completed $L$-function of $f\otimes\chi$ is defined as (\cite[p.~368]{Iwaniecbook})
\begin{align}
\Lambda(f\otimes\chi,s)=\left(\frac{\sqrt{D^2}}{2\pi}\right)^{s}\Gamma(s)L(f\otimes\chi,s). \label{eq:completeL}
\end{align}
Let $W_{D^2}:S_k(D^2,\chi^2)\rightarrow S_k(D^2,\overline{\chi}^2)$ be the Fricke involution given by:
$$W_{D^2}g(z)=(D^2)^{-k/2}z^{-k}g\left(\frac{-1}{D^2z}\right).$$ Then we have the functional equation (\cite[Theorem 14.7]{Iwaniecbook})
$$\Lambda(f\otimes\chi,s)=i^k\Lambda(W_{D^2}(f\otimes\chi),k-s).$$
Moreover, $L(f\otimes\chi,s)$ is entire.
In particular, if $f$ is a normalized Hecke eigenform in $S_k$, then $f\otimes\chi$ is a newform in $S_k(D^2,\chi^2)$, and the functional equation above becomes (\cite[Proposition 14.20]{Iwaniecbook})
\begin{align}
\Lambda(f\otimes\chi, s)=i^k w \Lambda(f\otimes\overline{\chi},k-s), \label{eq:functionaleqeigenform}
\end{align}
where $w=\frac{\tau(\chi)^2}{D}$ and $\tau(\chi)$ denotes the Gauss sum. 
From now on, $L(f,\chi,s)$ and $L(f\otimes \chi, s) $ are interchangeable for the rest of this paper. 
%and can be analytically continued to the whole complex plane with a certain functional equation. See \cite[pp.~368, 376-377]{Iwaniecbook} for details.
\subsection{Rankin-Selberg convolution}
For two elements $f$ and $g$ of $M_k(N)$ such that $fg$ is a cusp form, the Petersson inner product is given by 
\begin{align*}
    \langle f,g\rangle_{N}=\int_{\Gamma_{0}(N)\backslash\mathbb{H}}f(z)\overline{g(z)}y^kd\mu,
\end{align*}
Here $z=x+iy$ and $d\mu=\frac{dxdy}{y^2}$ is the $\Sl_2(\mathbb{R})$-invariant measure on the upper half plane $\mathbb{H}$.
We can relate $\mathcal{F}_{p,k_1,k_2,c}(z)$ with twisted $L$-value through the Rankin-Selberg convolution. 
\begin{lemma}[{\cite[Proposition 3.3]{ramanujantauni2026}}]\label{lem:rankinselberg} Let $\mathcal{F}_{p,k_1,k_2,c}$ be defined as in \eqref{eq:defofF}.
    If $f$ is a normalized Hecke eigenform in  $S_{k_1+k_2+2c}$, then 
    \begin{align*}
        \langle f, \mathcal{F}_{p,k_1,k_2,c}\rangle_1=&
        %\frac{(-1)^e}{e!}\frac{\Gamma(k+\ell+2e-1)\Gamma(k+e)D^kL(k,\chi)}{(4\pi)^{k+\ell+2e-1}(-2\pi i)^k\overline{\tau(\chi)}L(k,\overline{\chi})}\\&\quad\quad\quad\times
        \tilde{c}\cdot L(f,k_1+k_2+c-1)L(f,\chi_p,k_2+c),\nonumber
    \end{align*}
    where $\tilde{c}$ is some non-zero constant depending only on $k_1,k_2,c$ and $\chi_p$.
\end{lemma}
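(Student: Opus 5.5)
The statement to prove is Lemma \ref{lem:rankinselberg}. We unfold the trace, apply the Rankin--Selberg method to the bracket of two Eisenstein series, and recognise the resulting Dirichlet series as a product of two $L$-values of $f$.

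\textbf{Step 1 (remove the trace).} Since $f$ has level one, $f|_k\gamma=f$ for all $\gamma\in\Sl_2(\mathbb{Z})$, where $k=k_1+k_2+2c$. Unfolding the sum over $\Gamma_0(p)\backslash\Sl_2(\mathbb{Z})$ therefore gives
\[
\langle f,\mathcal{F}_{p,k_1,k_2,c}\rangle_1=\langle f,[G_{k_1,\chi_p},G_{k_2,\chi_p}]_c\rangle_p .
\]
The nebentypus is trivial because $\chi_p^2=\mathbf{1}$.

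\textbf{Step 2 (unfold against an Eisenstein series).} Write $G_{k_1,\chi_p}$ as a constant multiple of a non-holomorphic Poincaré/Eisenstein series at the cusp $\infty$,
\[
E(z)=\sum_{\gamma\in\Gamma_\infty\backslash\Gamma_0(p)}\overline{\chi_p}(d)\,(cz+d)^{-k_1}.
\]
This is where $k_1<k_2$ is used: $G_{k_1,\chi_p}$ is the series attached to $\infty$, with constant term $\tfrac12L(1-k_1,\chi_p)$. If needed for convergence, insert $y^s$ and continue analytically, as in Zagier's computation. The key input is Zagier's adjoint identity: for holomorphic $g$ one has
\[
\langle f,[E,g]_c\rangle=C\,\langle \delta^{c}_{\ast}\text{-adjoint}\rangle ,
\]
or equivalently, by unfolding directly, the Rankin--Cohen bracket $[E,g]_c$ is, up to a holomorphic projection, a constant times $E\cdot g^{(c)}$ plus derivatives of $E$. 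Unfolding the $\Gamma_\infty\backslash\Gamma_0(p)$ sum then reduces the pairing to the strip integral
\[
\int_0^\infty\!\!\int_0^1 f(z)\,\overline{g^{(c)}(z)}\,y^{k-2}\,dx\,dy
=\frac{\Gamma(k-1)}{(4\pi)^{k-1}}\sum_{n\ge1}\frac{a_f(n)\,\overline{b(n)}\,n^{c}}{n^{k-1}},
\]
with $g=G_{k_2,\chi_p}$ and $b(n)=\sigma_{k_2-1,\chi_p}(n)$. All binomial and Gamma factors combine into a nonzero constant depending only on $k_1,k_2,c$ and $\chi_p$.

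\textbf{Step 3 (identify the Dirichlet series).} Since $\sigma_{k_2-1,\chi_p}=\chi_p\ast n^{k_2-1}$ in the sense of Dirichlet convolution and $f$ is a Hecke eigenform, the standard Ramanujan-type identity applies:
\[
\sum_n a_f(n)\,\sigma_{k_2-1,\chi_p}(n)\,n^{-s}=\frac{L(f,s-k_2+1)\,L(f,\chi_p,s)}{L(2s-k_2-k+2,\chi_p)}.
\]
Evaluating at $s=k-1-c=k_1+k_2+c-1$ gives
\[
L(f,k_1+c)\,L(f,\chi_p,k_1+k_2+c-1)\big/L(k_1,\chi_p).
\]
The functional equation (\ref{eq:functionaleqeigenform}) with $s\mapsto k-s$ turns $L(f,\chi_p,k_1+k_2+c-1)$ into $L(f,\chi_p,k_2+c+1)$. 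The level-one functional equation turns $L(f,k_1+c)$ into $L(f,k_2+c)$. The bookkeeping of which form carries the twist depends on which Eisenstein series is unfolded, so I expect to swap roles: unfold against $G_{k_2,\chi_p}$ (the series with trivial character at $\infty$ and $\chi_p$ at $0$), so that the outputs land exactly at $L(f,k_1+k_2+c-1)$ and $L(f,\chi_p,k_2+c)$. The divisor $L(k_1,\chi_p)$ is nonzero since $k_1\ge3$ lies in the region of absolute convergence.

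\textbf{Main obstacle.} The hard part is Step 2: choosing the correct Eisenstein series and cusp to unfold, and showing the bracket's adjoint produces a pure shift $n^{c}$ with a nonzero constant. For the latter I will rely on Zagier's formula $\langle f,[E,g]_c\rangle=\text{const}\cdot\sum a_f(n)\overline{b(n)}n^{-(k-1-c)}$ (Zagier 1977, Prop.~6). The nonvanishing of $\tilde c$ comes from explicit Gamma factors and Gauss sums.
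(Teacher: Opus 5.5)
There is a genuine gap in Step 3: the convolution identity puts the twist on the wrong factor, and the proposal never recovers from this. Since $\sigma_{w-1,\chi_p}(n)=\sum_{d\mid n}\chi_p(d)d^{w-1}$, the character sits on the divisor that carries the power. So $\sigma_{w-1,\chi_p}=\mathbf{1}\ast(\chi_p\cdot n^{w-1})$, not $\chi_p\ast n^{w-1}$. Comparing Euler factors (Satake parameters $1$ and $\chi_p(\ell)\ell^{w-1}$ at each prime $\ell$) gives
\[
\sum_{n\ge1}\frac{a_f(n)\,\sigma_{w-1,\chi_p}(n)}{n^{s}}=\frac{L(f,s)\,L(f,\chi_p,s-w+1)}{L(2s-k-w+2,\chi_p)},\qquad k=k_1+k_2+2c .
\]
What you wrote is the identity for $\sum_{d\mid n}\chi_p(n/d)d^{w-1}$, i.e.\ for the coefficients $\sigma_{w-1,\mathbf{1},\chi_p}$ of the Eisenstein series at the cusp $0$. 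With your identity, no choice of which series to unfold yields the lemma. You always get the untwisted value at $k_1+c$ or $k_2+c$ and the twisted value at $k_1+k_2+c-1$. The functional equations only exchange $k_1+c\leftrightarrow k_2+c$ for the untwisted value and $k_1+k_2+c-1\leftrightarrow c+1$ for the twisted one; note $k-(k_1+k_2+c-1)=c+1$, not $k_2+c+1$. So you never reach the required pair, and your closing sentence ``I expect to swap roles'' is a hope, not a step.

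The second problem is the role you assign to $k_1<k_2$. $G_{k_1,\chi_p}$ and $G_{k_2,\chi_p}$ have the same shape: both are the Eisenstein series at $\infty$, with constant term $\tfrac12 L(1-k_i,\chi_p)$ and character $\chi_p$ in $d$. So ``attached to $\infty$'' does not single out $G_{k_1,\chi_p}$, and your parenthetical description of $G_{k_2,\chi_p}$ applies equally to both. What $k_1<k_2$ actually controls is convergence. Use the equivariance $[F|\gamma,G|\gamma]_c=[F,G]_c|\gamma$ and $[1,g]_c=\binom{c+a-1}{c}g^{(c)}$ for $1$ of weight $a$; no holomorphic projection enters, and the character values combine to $\chi_p(d)^2=1$. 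Then unfolding against the weight-$a$ series, with $g$ of weight $w$, produces $\sum_n a_f(n)\sigma_{w-1,\chi_p}(n)n^{-(k-1-c)}$. This converges absolutely (and the strip integral is then absolutely convergent) when $a\ge w+2$, but not when $a<w$. So write $[G_{k_1,\chi_p},G_{k_2,\chi_p}]_c=(-1)^c[G_{k_2,\chi_p},G_{k_1,\chi_p}]_c$ and unfold against $G_{k_2,\chi_p}$. The corrected identity with $w=k_1$, $s=k_1+k_2+c-1$ then gives exactly $L(f,k_1+k_2+c-1)L(f,\chi_p,k_2+c)/L(k_2,\chi_p)$, with all remaining factors nonzero and independent of $f$, and no functional equation is needed. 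The paper only quotes the lemma from the cited Proposition 3.3, but its remarks (why $k_1<k_2$ is imposed, and absolute convergence of both Euler products when $k_1\le k_2-2$) point to exactly this unfolding. Your original order can also be rescued: with the correct identity it yields $L(f,k_1+k_2+c-1)L(f,\chi_p,k_1+c)/L(k_1,\chi_p)$, and \eqref{eq:functionaleqeigenform} with $k-(k_1+c)=k_2+c$ converts this to the target. In that order, however, the unfolded integral diverges, so the $y^s$-regularization and continuation to $s=0$ would have to be carried out, not invoked ``if needed''.
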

\begin{remark}\label{remarkeulerprodneq0} Note that  $L(f,k_1+k_2+c-1)$ and $L(f,\chi_p,k_2+c)$ are nonzero because their Euler products converge absolutely when $k_1\leq k_2-2$. \end{remark}
\subsection{Primes in arithmetic progressions}
For positive integers $m$ and $a$ such that $\gcd(m, a)=1$, let $\pi(X;m, a)$ be the counting function
\begin{align*}
    \pi(X;m, a):=\#\{ p\leq X~|~p\in\mathscr{P}_{m,a}\}.
\end{align*}
Recall the Siegel-Walfisz theorem (see e.g., \cite[p.~124]{Iwaniecbook}):
\begin{align}\pi(X;m,a)=\frac{\Li(X)}{\varphi(m)}+o(\Li(X)), \label{eq:SiegelWalfisz}
\end{align}
where $\Li$ denotes the logarithmic integral, $\Li(X)\sim \frac{X}{\log X}$, and $\frac{o(\Li(X))}{\Li(X)}\rightarrow 0$ as $X\rightarrow\infty$.
The following fact is  likely well-known to experts. But we include a proof here for completeness.

\begin{lemma}\label{lem:arithmeticprogression}
For a fixed integer $t\geq1$, we write $t=u^2d$ with $d$ square-free, and let $\Delta_t$ be the fundamental discriminant associated to $t$:
$\Delta_t=d$ if $d\equiv1\pmod 4$ and $\Delta_t=4d$ if $d\equiv 2,3 \pmod 4$. We write $\chi_{\Delta_t}=\legendre{\Delta_t}{\cdot}$. Then we have 
\begin{align}
\lim_{X\rightarrow \infty}\frac{1}{\pi(X;m,a)}\sum_{\substack{p\leq X\\ p\equiv a\pmod m}}\legendre{t}{p}=\begin{cases} \chi_{\Delta_t}(a) & \Delta_t\mid m, \\ 0 & \Delta_t\nmid m.\end{cases} \label{eq:limit}
\end{align}
\end{lemma}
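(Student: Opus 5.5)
Throughout, $p$ is an odd prime with $p\equiv a \pmod m$. We may assume $\gcd(p,t)=1$, since only finitely many primes divide $t$ and they do not affect the limit in \eqref{eq:limit}.

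\textbf{Step 1: reduce to a Kronecker symbol.} For such $p$ we have $\legendre{t}{p}=\legendre{u^2}{p}\legendre{d}{p}=\legendre{d}{p}$. By quadratic reciprocity (including the supplementary laws for $-1$ and $2$), $\legendre{d}{p}=\chi_{\Delta_t}(p)$ for every odd prime $p\nmid d$. Here $\chi_{\Delta_t}$ is the primitive real character of conductor $|\Delta_t|$; this is the standard fact that the Kronecker symbol of a fundamental discriminant is a primitive character whose value at odd primes agrees with the Legendre symbol. So the average in \eqref{eq:limit} equals the average of $\chi_{\Delta_t}(p)$ over $p\le X$ with $p\equiv a\pmod m$.

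\textbf{Step 2: case $\Delta_t\mid m$.} Here $\chi_{\Delta_t}(p)$ depends only on $p \bmod m$. Hence $\chi_{\Delta_t}(p)=\chi_{\Delta_t}(a)$ for every term, and the average is identically $\chi_{\Delta_t}(a)$ (up to the finitely many excluded primes, which contribute nothing in the limit).

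\textbf{Step 3: case $\Delta_t\nmid m$.} Set $M=\lcm(m,|\Delta_t|)$ and split the primes $p\equiv a\pmod m$ into residue classes $b\pmod M$ with $b\equiv a\pmod m$ and $\gcd(b,M)=1$. On each such class $\chi_{\Delta_t}(p)=\chi_{\Delta_t}(b)$ is constant, and by \eqref{eq:SiegelWalfisz} each class has density $\varphi(m)/\varphi(M)$ inside $\mathscr{P}_{m,a}$. Therefore the limit equals
\[
\frac{\varphi(m)}{\varphi(M)}\sum_{\substack{b\bmod M,\ (b,M)=1\\ b\equiv a \bmod m}}\chi_{\Delta_t}(b).
\]
This sum is a coset sum of the character $\chi_{\Delta_t}$, viewed on $(\mathbb{Z}/M)^\times$, over the kernel $H$ of the reduction map $(\mathbb{Z}/M)^\times\to(\mathbb{Z}/m)^\times$. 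It therefore equals $\chi_{\Delta_t}(b_0)\sum_{h\in H}\chi_{\Delta_t}(h)$ for any fixed $b_0$ in the coset. The inner sum vanishes unless $\chi_{\Delta_t}$ is trivial on $H$. If $\chi_{\Delta_t}$ were trivial on $H$, it would factor through $(\mathbb{Z}/m)^\times$, i.e.\ it would be induced from a character modulo $m$. Then its conductor $|\Delta_t|$ would divide $m$ (using primitivity), contradicting $\Delta_t\nmid m$. So the limit is $0$.

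\textbf{Main obstacle.} The only delicate points are in Step 1: the identification $\legendre{d}{p}=\chi_{\Delta_t}(p)$ via reciprocity, and the fact that the conductor is exactly $|\Delta_t|$. These are needed for the factor-through-$m$ argument in Step 3. The density statement itself is a direct consequence of \eqref{eq:SiegelWalfisz}.
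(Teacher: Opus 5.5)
Your proof is correct and follows the paper's argument. You reduce $\legendre{t}{p}$ to $\chi_{\Delta_t}(p)$, split the progression $a \bmod m$ into classes modulo $\lcm(m,\Delta_t)$ with equal densities from \eqref{eq:SiegelWalfisz}, and kill the coset sum over the kernel $H$ unless $\chi_{\Delta_t}$ factors through $(\mathbb{Z}/m\mathbb{Z})^\times$, i.e.\ unless $\Delta_t\mid m$. The differences are cosmetic: you handle the case $\Delta_t\mid m$ directly by periodicity, with no prime counting needed, and you justify ``factors through $m$ iff $\Delta_t\mid m$'' via primitivity of $\chi_{\Delta_t}$, which the paper only asserts.
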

\begin{proof}
    Let $Q=\lcm(\Delta_t,m)$. Define 
$$\mathcal{Q}_a:=\{y\in(\mathbb{Z}/Q\mathbb{Z})^{\times}~:~y\equiv a \pmod m\}.$$
Partitioning the residue class $a\pmod m$ into classes modulo $Q$, we get
\begin{align}\pi(X;m,a)=\sum_{b\in\mathcal{Q}_a}\pi(X;Q,b)+ O_{t,m}(1),\label{eq:rewrite2}
\end{align}
where $O_{t,m}(1)$ is bounded independent of $X$.
Note also that $\legendre{t}{p}=\chi_{\Delta_t}(p)$ for all but the finitely many primes dividing $2t$. Then
\begin{align}
\sum_{\substack{p\leq  X\\ p\equiv a\pmod m}}\legendre{t}{p}&=\sum_{\substack{p\leq  X\\ p\equiv a\pmod m}}\chi_{\Delta_t}(p)+O_{t,m}(1)\nonumber\\&=\sum_{b\in\mathcal{Q}_a}\chi_{\Delta_t}(b)\pi(X;Q,b) + O_{t,m}(1). \label{eq:rewrite1}
\end{align}
From \eqref{eq:rewrite2}, \eqref{eq:rewrite1}, and \eqref{eq:SiegelWalfisz}, 
the limit \eqref{eq:limit} becomes
\begin{align}
\lim_{X\rightarrow \infty}\frac{\sum_{b\in\mathcal{Q}_a}\chi_{\Delta_t}(b)\left(\frac{1}{\varphi(Q)}+\frac{o(\Li (X))}{\Li(X)}\right)+\frac{O_{t,m}(1)}{\Li(X)}}{\sum_{b\in\mathcal{Q}_a}\left(\frac{1}{\varphi(Q)}+\frac{o(\Li (X)}{\Li(X)})\right)+\frac{O_{t,m}(1)}{\Li(X)}}
=\frac{1}{|\mathcal{Q}_a|}\sum_{b\in\mathcal{Q}_a}\chi_{\Delta_t}(b).\label{eq:midstep3}
\end{align}
Now let $H$ be the kernel of the natural projection $\pi:(\mathbb{Z}/Q\mathbb{Z})^{\times}\rightarrow(\mathbb{Z}/m\mathbb{Z})^{\times}$. Choose any $b_0\in\mathcal{Q}_a$. Then we have
\begin{align*}
    \frac{1}{|\mathcal{Q}_a|}\sum_{b\in\mathcal{Q}_a}\chi_{\Delta_t}(b)=\chi_{\Delta_t}(b_0)\frac{1}{|H|}\sum_{b\in H}\chi_{\Delta_t}(b).
\end{align*}
 By the orthogonality relations, the above character sum is zero unless $\chi_{\Delta_t}$ is trivial on $H$, equivalently, unless $\chi_{\Delta_t}$ factors through $(\mathbb{Z}/m\mathbb{Z})^{\times}$, which occurs exactly when $\Delta_t\mid m$. Note also that if $\chi_{\Delta_t}$ is trivial on $H$, then $\chi_{\Delta_t}(b_0)=\chi_{\Delta_t}(a)$. This gives the desired result.
\end{proof}
\begin{remark}\label{remark:square}
    Note that when $m=4$, $\Delta_t\mid 4$ if and only if $t$ is a square, and thus the limit \eqref{eq:limit} is $1$ if $t$ is a square, and $0$ otherwise.
\end{remark}
\subsection{Symmetric square \texorpdfstring{$L$}{L}-functions}
Let $f(z)=\sum_{n\geq1}a_f(n)q^n$ be a normalized Hecke eigenform in $S_k$. For each prime $p$, we denote by $\alpha_p$ and $\beta_p$ the roots of 
$$X^2-a_f(p)X+p^{k-1}=0.$$
The symmetric square $L$-function attached to $f$ for $\Re(s)\gg0$  is defined by
\begin{align*}
L(\Sym^2 f, s)=\prod_p(1-\alpha_p^2 p^{-s})^{-1}(1-\alpha_p\beta_p p^{-s})^{-1}(1-\beta_p^2p^{-s})^{-1},
\end{align*}
which has an analytic continuation to the whole complex plane and satisfies a functional equation \cite[p.~109]{Zagier1976}. If $\Re(s)>k$, then the series $\sum_{n\geq1}a_f(n^2)n^{-s}$ converges absolutely by Deligne's theorem, and we have \cite[p.~376]{Coh17}
\begin{align}
L(\Sym^2 f,s)=\zeta(2s+2-2k)\sum_{n=1}^{\infty}\frac{a_f(n^2)}{n^s}.\label{eq:symsquareandseries}
\end{align}
In particular, $L(\Sym^2 f,s)\neq 0$ for $\Re(s)>k$ since the Euler product converges absolutely. 
We need the following simple fact in the next section. 
\begin{lemma}\label{lem:symsquare}
Let $f(z)=\sum_{n\geq1}a_f(n)q^n$ be a normalized Hecke eigenform in $S_k$, and let $s$ be a complex number such that $\Re(s)>\frac{k+1}{2}$. Then for any fixed square-free integer $r\geq1$, we have 
\begin{align}
\sum_{m=1}^{\infty}\frac{a_f(rm^2)}{(rm^2)^s}=A_{s,r,k}L(\Sym^2f,2s)a_f(r),
\end{align}
where $A_{s,r,k}$ is a nonzero constant depending only on $s,r$ and $k$, given by 
\begin{align*}
  A_{s,r,k}=r^{-s}\zeta(4s+2-2k)^{-1}\prod_{p\mid r}(1+p^{k-1-2s})^{-1}.
\end{align*}
\end{lemma}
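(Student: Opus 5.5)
The plan is to exploit multiplicativity of the Hecke eigenvalues. Since $r$ is square-free, every $n$ with $rm^2=n$ has a unique such factorization, and we want to separate the $r$-part from the square part. I would first prove the identity at the level of Dirichlet series for $\Re(s)$ large, where all manipulations are justified by absolute convergence. The right half-plane $\Re(s)>\frac{k+1}{2}$ ensures that $\sum_m a_f(rm^2)(rm^2)^{-s}$ converges absolutely by Deligne's bound, since $a_f(rm^2)\ll m^{k-1+\epsilon}$ and $2\Re(s)>k+1$. For the same reason $L(\Sym^2 f,2s)$ is given by \eqref{eq:symsquareandseries} with $\Re(2s)>k+1>k$, and $\zeta(4s+2-2k)$ has $\Re(4s+2-2k)>4$. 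So the identity can be proved directly in this region with no continuation needed.

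The key step is to write $m=m_1m_2$, where $m_1$ is composed only of primes dividing $r$ and $\gcd(m_2,r)=1$. Multiplicativity gives $a_f(rm^2)=a_f(rm_1^2)a_f(m_2^2)$, so the sum factors as
\[
r^{-s}\Big(\sum_{\gcd(m_2,r)=1}\frac{a_f(m_2^2)}{m_2^{2s}}\Big)\prod_{p\mid r}\Big(\sum_{j\ge0}\frac{a_f(p^{2j+1})}{p^{2js}}\Big).
\]
For the coprime part, I would use \eqref{eq:symsquareandseries} locally. The local factor of $\sum_n a_f(n^2)n^{-2s}$ at $p$ equals $(1-p^{2k-2-4s})\,L_p(\Sym^2 f,2s)$. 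Removing the primes $p\mid r$ therefore gives
\[
\zeta(4s+2-2k)^{-1}L(\Sym^2 f,2s)\prod_{p\mid r}\Big(\sum_{j}a_f(p^{2j})p^{-2js}\Big)^{-1}.
\]

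The main computation is a local identity at each $p\mid r$. Set $x=p^{-2s}$, and write $S_{\mathrm{odd}}=\sum_j a_f(p^{2j+1})x^j$ and $S_{\mathrm{even}}=\sum_j a_f(p^{2j})x^j$. I need to show that $S_{\mathrm{odd}}/S_{\mathrm{even}}=a_f(p)(1+p^{k-1}x)^{-1}$, that is, $S_{\mathrm{odd}}(1+p^{k-1}x)=a_f(p)S_{\mathrm{even}}$. This follows from the Hecke recursion $a_f(p)a_f(p^{n})=a_f(p^{n+1})+p^{k-1}a_f(p^{n-1})$ applied with $n=2j$. Indeed, $a_f(p)a_f(p^{2j})=a_f(p^{2j+1})+p^{k-1}a_f(p^{2j-1})$, and summing against $x^j$ gives exactly $S_{\mathrm{odd}}+p^{k-1}xS_{\mathrm{odd}}$. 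As a sanity check, at $j=0$ the relation reads $a_f(p)=a_f(p)$.

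Assembling the pieces yields the constant $A_{s,r,k}=r^{-s}\zeta(4s+2-2k)^{-1}\prod_{p\mid r}(1+p^{k-1-2s})^{-1}$. The prefactor $r^{-s}$ comes from pulling $r^{-s}$ out of $(rm^2)^{-s}$, and the factor $a_f(r)=\prod_{p\mid r}a_f(p)$ comes from the local identities. This constant is nonzero because $\zeta$ and $(1+p^{k-1-2s})$ are finite in this region, since $\Re(k-1-2s)<-2$. The only real obstacle is this local even/odd generating-function identity; the rest is bookkeeping of Euler factors under absolute convergence.
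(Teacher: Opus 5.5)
Your proof is correct, but it is organized differently from the paper's. The paper never splits into Euler factors. It multiplies $\sum_m a_f(m^2)m^{-2s}$ by $a_f(r)$ and applies the full Hecke relation $a_f(r)a_f(m^2)=\sum_{d\mid (r,m^2)}d^{k-1}a_f(rm^2/d^2)$. It then notes that for square-free $r$ the condition $d\mid(r,m^2)$ is equivalent to $d\mid r$ and $d\mid m$, and reindexes $m=dn$. This gives $a_f(r)\sum_m a_f(m^2)m^{-2s}=\prod_{p\mid r}(1+p^{k-1-2s})\sum_n a_f(rn^2)n^{-2s}$ in one step, after which \eqref{eq:symsquareandseries} finishes.

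Your local identity $a_f(p)S_{\mathrm{even}}=(1+p^{k-1}x)S_{\mathrm{odd}}$ is exactly the $p$-component of that global identity. Take the product of your local identities over $p\mid r$ and multiply by the common factor $\prod_{p\nmid r}S_{\mathrm{even},p}$; the result is the paper's equation. So the mathematical content is the same. Your route needs only the prime-power recursion and shows where each factor $(1+p^{k-1-2s})^{-1}$ comes from. The paper's route avoids the extra bookkeeping: splitting $m=m_1m_2$ and dividing by $\prod_{p\mid r}S_{\mathrm{even},p}$.

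That division requires $S_{\mathrm{even},p}\neq 0$. This holds, for instance from your formula $S_{\mathrm{even},p}=(1-p^{2k-2-4s})L_p(\Sym^2 f,2s)$ with $|p^{2k-2-4s}|<1$. You can also avoid the division by substituting $S_{\mathrm{odd},p}=a_f(p)(1+p^{k-1}x)^{-1}S_{\mathrm{even},p}$ before reassembling the full product.

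You also do not need the local symmetric-square factor. Multiplicativity of $n\mapsto a_f(n^2)$ already gives the coprime part as $\bigl(\sum_m a_f(m^2)m^{-2s}\bigr)\prod_{p\mid r}S_{\mathrm{even},p}^{-1}$, and the global identity \eqref{eq:symsquareandseries} then applies directly.
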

\begin{proof}
  Note that $f$ is a normalized Hecke eigenform. Then the Hecke relation gives 
\begin{align*}
a_f(r)a_f(m^2)=\sum_{d\mid(r,m^2)}d^{k-1}a_f\left(\frac{rm^2}{d^2}\right).
\end{align*} 
Since $r$ is square-free, $d\mid(r,m^2)$ if and only if $d\mid r$ and $d\mid m$. As $s$ lies within the region of absolute convergence for all series involved, we  use the above Hecke relation to get 
\begin{align}
a_f(r)\sum_{m=1}^{\infty}\frac{a_f(m^2)}{m^{2s}}&=\sum_{m=1}^{\infty}\sum_{d\mid r,m}d^{k-1}a_f\left(\frac{rm^2}{d^2}\right)m^{-2s}\nonumber\\&=\sum_{d\mid r}d^{k-1}\sum_{\substack{m\geq1\\d\mid m}}a_f\left(\frac{rm^2}{d^2}\right)m^{-2s}\nonumber\\&=\sum_{d\mid r}d^{k-1-2s}\sum_{n=1}^{\infty}\frac{a_f(rn^2)}{n^{2s}}\nonumber\\&=\prod_{p\mid r}(1+p^{k-1-2s})\sum_{n=1}^{\infty}\frac{a_f(rn^2)}{n^{2s}},\label{eq:producthecke}
\end{align}
where we used the fact that $\sum_{d\mid r}d^{k-1-2s}=\prod_{p\mid r}(1+p^{k-1-2s})$ when $r$ is square-free in the last equality. Now applying \eqref{eq:symsquareandseries} to \eqref{eq:producthecke} gives the result.
\end{proof}

\section{Determination of cusp forms by noncentral quadratic-twist \texorpdfstring{$L$}{L}-values}\label{sect:determine}

There are many works on determining cusp forms by twists of $L$-values. One of the most influential results is the work by Luo and Ramakrishnan \cite{luo1997}, which showed that a certain set of central twisted $L$-values, up to a constant, is sufficient to determine a normalized holomorphic newform. 
Our result differs from previous work in two respects. Firstly, we consider the determination of general cusp forms, not necessarily normalized newforms. Secondly, we consider a set of noncentral quadratic-twist $L$-values rather than central ones, which is technically much simpler. 

We need the following fact on determining cusp forms by square-free Fourier coefficients due to Anamby and Das \cite{Das2019}.
\begin{lemma}\label{lem:squarefree}
Let $f(z)=\sum_{n\geq1}a_f(n)q^n\in S_k$ be such that $a_f(r)=0$ for all square-free integers $r$. Then $f=0$. 
\end{lemma}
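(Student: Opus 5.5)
We argue by contradiction using the Hecke eigenbasis of $S_k$ together with a Dedekind--Artin style independence argument for multiplicative functions restricted to squarefree integers. Let $f_1,\dots,f_d$ be the normalized Hecke eigenforms in $S_k$, which form a basis, and write $f=\sum_{i=1}^d c_if_i$ with $a_i(n):=a_{f_i}(n)$. The hypothesis says $\sum_i c_ia_i(n)=0$ for every squarefree $n$. The goal is to show that this forces every $c_i=0$.

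\textbf{Step 1: reduction to a linear independence statement.} Fix a squarefree $r\ge1$ and consider squarefree $n$ with $\gcd(n,r)=1$. Then $rn$ is squarefree, and multiplicativity gives $a_i(rn)=a_i(r)a_i(n)$. The hypothesis therefore yields
\begin{align*}
\sum_{i=1}^d \bigl(c_ia_i(r)\bigr)\,a_i(n)=0\quad\text{for all squarefree } n \text{ with } \gcd(n,r)=1 .
\end{align*}
It then suffices to prove the following claim.

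\emph{Claim.} For any integer $R\ge1$, the functions $n\mapsto a_i(n)$, $1\le i\le d$, restricted to the set $\mathcal{N}_R$ of squarefree $n$ coprime to $R$, are linearly independent.

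Granting the Claim and taking $r=1$ (or $R=r$ in general), we get $c_ia_i(r)=0$ for all $i$. With $r=1$ this is $c_i=0$ for all $i$, since $a_i(1)=1$. Hence $f=0$.

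\textbf{Step 2: proof of the Claim by induction on the length of a relation.} Suppose $\sum_{i\in I}b_ia_i(n)=0$ for all $n\in\mathcal{N}_R$, with all $b_i\neq0$, and suppose this is a relation of minimal length $|I|$ among all $R$. If $|I|=1$, evaluating at $n=1$ gives $b_i=0$, a contradiction.

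If $|I|\ge2$, pick $1,2\in I$. We need a prime $p\nmid R$ with $a_1(p)\neq a_2(p)$. Here we invoke strong multiplicity one for level one: two distinct normalized eigenforms in $S_k$ cannot have the same $T_p$-eigenvalues for all but finitely many $p$. This is a consequence of Atkin--Lehner theory, since level-one eigenforms are newforms.

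For $n\in\mathcal{N}_{pR}$ we have $pn\in\mathcal{N}_R$ and $a_i(pn)=a_i(p)a_i(n)$. This gives two relations on $\mathcal{N}_{pR}$:
\begin{align*}
\sum_{i\in I}b_ia_i(p)a_i(n)=0,\qquad \sum_{i\in I}b_ia_1(p)a_i(n)=0 .
\end{align*}
Subtracting them produces a relation on $\mathcal{N}_{pR}$ in which the $i=1$ term vanishes and the $i=2$ coefficient $b_2\bigl(a_2(p)-a_1(p)\bigr)$ is nonzero. This relation is nontrivial and strictly shorter than the original, contradicting minimality. The flexibility of letting $R$ grow is exactly what lets the induction avoid the fact that the squarefree integers are not closed under multiplication.

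\textbf{Main obstacle.} The only non-elementary input is the existence, for each pair $i\neq j$ and each $R$, of a prime $p\nmid R$ with $a_i(p)\neq a_j(p)$. I would cite strong multiplicity one as stated above. As a fallback, one could use that $f_i$ and $f_j$ are distinct eigenvectors of the Hecke algebra generated by the $T_p$ with $p\nmid R$. That algebra is the full level-one Hecke algebra: for $p\mid R$, $T_p$ is a polynomial in the other operators on the semisimple commutative algebra, because the eigensystems away from $R$ already separate the $f_i$ by Atkin--Lehner. Everything else is bookkeeping with multiplicativity on coprime squarefree arguments.
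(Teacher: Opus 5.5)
Your argument is correct, but it takes a different route from the paper. The paper gives no argument of its own: it quotes Theorem 6 of Anamby--Das \cite{Das2019}, a general-level result, and specializes it to $N=1$. You instead prove the level-one case directly.

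Expanding in the Hecke eigenbasis turns the hypothesis into a linear relation among the multiplicative functions $a_i$ on squarefree integers. Your minimal-length (Dedekind--Artin) argument, with the modulus $R$ allowed to grow so that $pn$ stays squarefree, is sound:
\begin{itemize}
\item for $n\in\mathcal{N}_{pR}$ one has $pn\in\mathcal{N}_R$ and $a_i(pn)=a_i(p)a_i(n)$;
\item the subtracted relation is nontrivial and strictly shorter;
\item $1\in\mathcal{N}_R$ settles the base case.
\end{itemize}
Step 1 with general $r$ is superfluous, since the hypothesis already is the relation on $\mathcal{N}_1$.

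The one real input is strong multiplicity one, and this scheme cannot avoid it. A minimal relation forces the $a_i(p)$, $i\in I$, to coincide for every $p\nmid R$, so your Claim for general $R$ is essentially equivalent to strong multiplicity one. Cite it directly from Atkin--Lehner theory. Your ``fallback'' is circular: showing that the $T_p$ with $p\nmid R$ generate the full Hecke algebra already requires that the eigensystems away from $R$ separate the $f_i$.

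As for what each route buys: yours is self-contained modulo standard Hecke theory and shows the lemma is a linear independence of characters. However, it relies on level one having a basis of multiplicative eigenforms. At level $N$ the oldforms $g(dz)$ are not multiplicative and the argument would need reworking, which is what the cited general-level theorem supplies.
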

\begin{proof}
This is a special case of \cite[Theorem 6]{Das2019} for $N=1$.
\end{proof}

\begin{theorem}\label{thm:twistvanishing}
Fix $b\in\{1,3\}$. Let $f$ be a cusp form in $S_k$, not necessarily a Hecke eigenform, and let $s$ be a complex number such that $\Re(s)>\frac{k+1}{2}$. If $L(f,\chi_p,s)=0$ for all primes in the residue 
class $b\pmod 4$, then $f=0$.
\end{theorem}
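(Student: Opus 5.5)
Write $f=\sum_{j} c_j f_j$ with $f_j$ the normalized Hecke eigenforms of $S_k$. The plan is to average the vanishing twisted $L$-values over primes $p\equiv b\pmod 4$ with a weight $\chi_p(r)$, for each fixed square-free $r$. This averaging should isolate the Fourier coefficients $a_f(rm^2)$. Lemma \ref{lem:symsquare} then identifies the result with $L(\Sym^2 f_j,2s)\,a_{f_j}(r)$. Finally, Lemma \ref{lem:squarefree} forces $f=0$.

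\textbf{Reduce to a fixed square-free $r$.} Since $\Re(s)>\frac{k+1}{2}$, we have the absolutely convergent expansion $L(f,\chi_p,s)=\sum_n a_f(n)\chi_p(n)n^{-s}$. Fix a square-free $r\ge1$. By quadratic reciprocity, for odd $p\nmid n$ we have $\chi_p(n)\chi_p(r)=\legendre{nr}{p}$ up to a sign. That sign depends only on $p\bmod 4$ and on $n,r$ modulo $4$ (through their odd parts and their $2$-parts via $\legendre{2}{p}$). To keep the sign harmless I would instead write $\chi_p(n)=\legendre{n^*}{p}$, where $n^*=\pm n$ is chosen so that reciprocity is clean. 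The cleanest route is to note that for $p\equiv b\pmod 4$ fixed, $\legendre{n}{p}=\epsilon_b(n)\legendre{p}{n'}$-type expressions all reduce to a Legendre symbol $\legendre{t}{p}$ for an integer $t=\pm n$ or $\pm 2n$ determined by $n$ and $b$.

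\textbf{Average over primes.} Consider
\[
0=\frac{1}{\pi(X;4,b)}\sum_{\substack{p\le X\\p\equiv b\ (4)}}\chi_p(r)\,L(f,\chi_p,s)
=\sum_{n\ge1}\frac{a_f(n)}{n^s}\cdot\frac{1}{\pi(X;4,b)}\sum_{\substack{p\le X\\p\equiv b\ (4)}}\chi_p(rn).
\]
The inner average is bounded by $1$ and the outer series converges absolutely. Dominated convergence therefore lets us take $X\to\infty$ termwise. By Lemma \ref{lem:arithmeticprogression} and Remark \ref{remark:square}, applied after rewriting $\chi_p(rn)$ as $\pm\legendre{t}{p}$ as above, the limit is nonzero only when $rn$ is a square up to the sign. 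The $\pm$ sign is constant in $p\equiv b\pmod 4$, and the case where $t=-\square$ gives limit $\chi_{-4}(b)=\pm1$. Either way the limit is a unit determined by $b$. Since $r$ is square-free, $rn=\square$ means $n=rm^2$. We obtain
\[
0=\epsilon\sum_{m\ge1}\frac{a_f(rm^2)}{(rm^2)^s},\qquad |\epsilon|=1.
\]
Here I must check that the sign is uniform in $m$, i.e., depends only on $r$ and $b$. I expect it does, since $rm^2$ and $r$ differ by a square.

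\textbf{Conclude via eigenforms.} By Lemma \ref{lem:symsquare} applied to each $f_j$, we get $\sum_j c_j A_{s,r,k}L(\Sym^2 f_j,2s)a_{f_j}(r)=0$. Since $\Re(2s)>k+1>k$, each $L(\Sym^2 f_j,2s)\neq0$, and $A_{s,r,k}\ne0$. Set $g=\sum_j c_jL(\Sym^2 f_j,2s)f_j\in S_k$. Then $a_g(r)=0$ for every square-free $r$, so $g=0$ by Lemma \ref{lem:squarefree}. Linear independence of the $f_j$ gives $c_j=0$ for all $j$, hence $f=0$.

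\textbf{Main obstacle.} The main obstacle is the sign bookkeeping in the prime-averaging step. This means handling reciprocity for $p\equiv b\pmod 4$, the primes $2\mid n$ and $p\mid rn$ (which are finitely many per $n$ and do not affect the limits), and ensuring the limiting constant is the same nonzero value for every $n=rm^2$. I would first try to show that $\chi_p(r)\chi_p(n)=\chi_p(rn)=\legendre{rn}{p}$ directly; this holds because $\chi_p$ is the Legendre symbol in $\cdot$ modulo $p$. In that case Lemma \ref{lem:arithmeticprogression} with $t=rn$ applies immediately, with no reciprocity needed at all. That makes the obstacle disappear, and leaves only the justification of the interchange of limit and sum.
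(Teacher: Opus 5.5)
Your proposal is correct and follows the paper's proof essentially step for step. You twist by $\chi_p(r)$ for square-free $r$, average over $p\equiv b\pmod 4$ using dominated convergence, extract $\sum_m a_f(rm^2)(rm^2)^{-s}=0$, pass through Lemma \ref{lem:symsquare} to the auxiliary form $\sum_j c_jL(\Sym^2 f_j,2s)f_j$, and kill it with Lemma \ref{lem:squarefree}.

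The reciprocity and sign discussion in your middle paragraph is unnecessary, and its $t=-\square$ case falls outside Lemma \ref{lem:arithmeticprogression}, which assumes $t\ge1$. As you realize at the end, $\chi_p(r)\chi_p(n)=\legendre{rn}{p}$ with $rn\ge1$, so Remark \ref{remark:square} gives limit exactly $1$ when $n=rm^2$ and $0$ otherwise. This is exactly how the paper argues.
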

\begin{proof}
 Let $r\geq1$ be an arbitrary square-free integer. Then we have by assumption
\begin{align*}
\chi_p(r)L(f,\chi_p,s)=0,
\end{align*}
for all $p\in\mathscr{P}_{4,b}$.
Taking the average, we get 
\begin{align*}
\frac{1}{\pi(X;4,b)}\sum_{\substack{p\leq X\\ p\equiv b \pmod 4}}\chi_p(r)L(f,\chi_p,s)=0.
\end{align*}
Let $\sigma=\Re(s)$. Since $\sigma>\frac{k+1}{2}$, we have $\sum_{n\geq1}|a_f(n)|n^{-\sigma}<\infty$
by Deligne's bound. Note also that for every $X$ and $n$, 
\begin{align*} 
\left|\frac{1}{\pi(X;4,b)}\sum_{\substack{p\leq X\\ p\equiv b \pmod 4}}\legendre{rn}{p}\right|\leq 1. 
\end{align*}
Hence, we can interchange the limit and the sum, obtaining 
\begin{align}
0&=\lim_{X\rightarrow \infty}\frac{1}{\pi(X;4,b)}\sum_{\substack{p\leq X\\ p\equiv b \pmod 4}}\chi_p(r)L(f,\chi_p,s)\nonumber\\&=\sum_{n=1}^{\infty}\frac{a_f(n)}{n^s}\lim_{X\rightarrow \infty}\frac{1}{\pi(X;4,b)}\sum_{\substack{p\leq X\\ p\equiv b \pmod 4}}\legendre{rn}{p}.\label{eq:limitinside}
\end{align}
Since $r$ is square-free, $rn$ is a square if and only if $n=rm^2$. Applying Lemma \ref{lem:arithmeticprogression} to \eqref{eq:limitinside}, we get 
\begin{align}
\sum_{m=1}^{\infty}\frac{a_f(rm^2)}{(rm^2)^s}=0.\label{eq:vanish}
\end{align}
 Suppose $f=\sum_{j=1}^dc_j f_j$, where $f_j$'s ($1\leq j\leq d$) are the normalized Hecke eigenforms of $S_k$, and each $f_j$ has Fourier coefficients $a_{f_j}(n)$.   Then \eqref{eq:vanish} gives 
\begin{align*}
    \sum_{j=1}^dc_j\sum_{m=1}^{\infty}\frac{a_{f_j}(rm^2)}{(rm^2)^s}=0.
\end{align*}
By Lemma \ref{lem:symsquare} and the fact that $A_{s,r,k}\neq0$, 
we obtain that 
\begin{align}
    \sum_{j=1}^dc_jL(\Sym^2f_j,2s)a_{f_j}(r)=0. \label{eq:vanishco}
\end{align}
We now define another cusp form associated to $f$ by
\begin{align*}
    F_f(z)=\sum_{j=1}^d c_j L(\Sym^2f_j,2s)f_j.
\end{align*}
Then the $r$-th Fourier coefficient of $F_f$ vanishes by \eqref{eq:vanishco}. Because this holds for any square-free integer $r$, we must have $F_f=0$ by Lemma \ref{lem:squarefree}. Since $L(\Sym^2 f_j,2s)\neq0$, we have $c_j=0$, and thus $f=0$. This completes the proof.
\end{proof}

\section{Proof of Theorem \ref{thm:mainthm}}\label{sect:proof}
%Finally, we are ready to prove Theorem \ref{thm:mainthm}.
\begin{proof}[Proof of Theorem \ref{thm:mainthm}]
  It suffices to show that if $g\in S_{k_1+k_2+2c}$ is orthogonal to  $\mathcal{F}_{p,k_1,k_2,c}(z)$ for all $p\in\mathscr{P}_{4,(-1)^{k_1}}$, then $g=0$.  Write $g=\sum_{j=1}^dc_jf_j$, where $f_j$'s are normalized Hecke eigenforms in $S_{k_1+k_2+2c}$. By Lemma \ref{lem:rankinselberg}, we have
\begin{align*}
        \langle f_j, \mathcal{F}_{p,k_1,k_2,c}\rangle_1=&
        %\frac{(-1)^e}{e!}\frac{\Gamma(k+\ell+2e-1)\Gamma(k+e)D^kL(k,\chi)}{(4\pi)^{k+\ell+2e-1}(-2\pi i)^k\overline{\tau(\chi)}L(k,\overline{\chi})}\\&\quad\quad\quad\times
        \tilde{c}\cdot L(f_j,k_1+k_2+c-1)L(f_j,\chi_p,k_2+c),
    \end{align*}
where $\tilde{c}$ is a nonzero constant independent of $f_j$. Thus, the condition on orthogonality implies that
\begin{align}
\sum_{j=1}^dc_jL(f_j,k_1+k_2+c-1)L(f_j,\chi_p,k_2+c)=0,\label{eq:mid}
\end{align}
for all $p\in\mathscr{P}_{4,(-1)^{k_1}}$. Following the idea of \cite[Theorem 1]{Kohnen2005}, we define another cusp form associated to $g$ by:
\begin{align*}
G_g(z)=\sum_{j=1}^d c_j L(f_j,k_1+k_2+c-1)f_j(z).
\end{align*}
Now, \eqref{eq:mid} implies that 
$L(G_g,\chi_p,k_2+c)=0$
for all $p\in\mathscr{P}_{4,(-1)^{k_1}}$. Note that $k_2+c>\frac{k_1+k_2+2c+1}{2}$. Then by Theorem \ref{thm:twistvanishing}, we have $G_g=0$. Since each $L(f_j,k_1+k_2+c-1)\neq 0$, we must have $c_j=0$, and thus $g=0$. 
 Thus, the proof of Theorem \ref{thm:mainthm} is complete.
\end{proof}
\section{Discussion and future work}\label{sect:twist}
%In this section, we will view our results in a general framework of the theory of twisted periods. We also raise several questions for future work.

 Let $\chi$ be a primitive Dirichlet character modulo $D$, and let $f$ be a cusp form in $S_k$, not necessarily a Hecke eigenform. For $0\leq t\leq k-2$,  the $t$-th twisted period for $f\in S_k$ is defined as \cite[p.~978]{twistedLvaluesFukuhara}
    \begin{align}
        r_{t,\chi}(f):=\int_0^{i\infty} (f\otimes{\chi})(z) z^t dz=\frac{t!}{(-2\pi i)^{t+1}}L(f\otimes\chi,t+1).\label{eq:periodandLvalue}
    \end{align}
%where $(f\otimes\chi)(z):=\sum_{n\geq1}a_f(n)\chi(n)q^n$ is an element in $S_{k}(D^2,\chi^2)$, see \cite[Proposition 14.19]{Iwaniecbook}.
In particular, when $D=1$, we obtain the regular periods $r_t$ ($0\leq t\leq k-2)$.  The result of Eichler-Shimura \cite{Eichlerperiod,Kohnen1984,Manin1973} asserts that the even periods $r_0,r_2,\dots,r_{k-2}$ span the vector space $S_k^{\ast}$. Similarly, the odd periods $r_1,r_3,\dots,r_{k-3}$ span $S_k^{\ast}$ as well. Moreover, these periods are not linearly independent because both sets of even and odd periods are of size $> \dim S_{k}$. In fact, they are subject to many linear dependence relations called the Eichler-Shimura relations \cite{Manin1973}.
%;  see \eqref{eq:ES1}, \eqref{eq:ES2}, and \eqref{eq:ES3} below. 
%\begin{gather}
%    \quad r_t+(-1)^{t}r_{k-2-t}=0,\label{eq:ES1}\tag{$1_t$}
%    \\\quad(-1)^{t}r_t+\sum_{\substack{0\leq \ell\leq t\\\ell\equiv0\pmod2}}\binom{t}{\ell}r_{k-2-t+\ell}+\sum_{\substack{0\leq \ell\leq k-2-t\\\ell\equiv t\pmod2}}\binom{k-2-t}{\ell}r_{\ell}=0,\label{eq:ES2}\tag{$2_t$}
%    \\\quad\sum_{\substack{1\leq \ell\leq t\\\ell\equiv1\pmod2}}\binom{t}{\ell}r_{k-2-t+\ell}+\sum_{\substack{0\leq \ell\leq k-2-t\\\ell\not\equiv t\pmod2}}\binom{k-2-t}{\ell}r_{\ell}=0.\label{eq:ES3}\tag{$3_t$}
%\end{gather}

This motivates the development of an Eichler-Shimura theory for twisted periods. For example, in \cite[p.~2]{twistedperiod}, the authors asked several questions about these periods: the first question about these periods is whether they span the whole space of $S_k^{\ast}$. The second question is whether we can find Eichler-Shimura-type linear relations  among them. 
Thirdly, we would like to know which periods are linearly independent. 

Unfortunately, the very first question was not solved in \cite{twistedperiod}; instead, the authors provided evidence for the linear independence of periods in two scenarios \cite[Theorems 1.1-1.4]{twistedperiod}: 
\begin{itemize}
    \item[$(C_t):$]\label{item:caseA} fixed character $\chi$ with different indices $t$; 
    \item[$(C_{\chi}):$]\label{item:caseB} fixed index $t$ with different characters $\chi$ with the same conductor $D$.
\end{itemize}
Now we can answer the aforementioned first question in the setting where $t$ is fixed while $\chi_p$ takes different quadratic primitive characters modulo $p$.  
\begin{theorem}\label{thm:twperiod}
    Fix $b\in\{1,3\}$. Let $k$ be an even integer and $t$ be an integer such that $\frac{k}{2}\leq t \leq k-2$ or $0\leq t \leq \frac{k}{2}-2$. Then 
    \begin{align}\bigcap_{p\in\mathscr{P}_{4,b}}\ker (r_{t,\chi_p}) = 0.\label{eq:ker}\end{align}
    In particular, the periods $r_{t,\chi_p}$ ($p\in \mathscr{P}_{4,b}$) span $S_k^{\ast}$.
\end{theorem}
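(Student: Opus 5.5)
Suppose $f\in S_k$ lies in $\ker(r_{t,\chi_p})$ for every $p\in\mathscr{P}_{4,b}$; we aim to show $f=0$. By \eqref{eq:periodandLvalue}, $r_{t,\chi_p}(f)=\frac{t!}{(-2\pi i)^{t+1}}L(f,\chi_p,t+1)$, so the hypothesis gives $L(f,\chi_p,t+1)=0$ for all $p\in\mathscr{P}_{4,b}$. We then split according to the range of $t$.

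\emph{Case $\frac{k}{2}\le t\le k-2$.} Here $\Re(t+1)=t+1\ge \frac{k}{2}+1>\frac{k+1}{2}$. Theorem \ref{thm:twistvanishing} with $s=t+1$ applies directly and gives $f=0$.

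\emph{Case $0\le t\le \frac{k}{2}-2$.} We reflect to the region of absolute convergence via the functional equation, eigenform by eigenform. Write $f=\sum_j c_j f_j$ with $f_j$ normalized Hecke eigenforms. Since $\chi_p$ is quadratic, $\overline{\chi_p}=\chi_p$, and \eqref{eq:completeL} together with \eqref{eq:functionaleqeigenform} gives
\begin{align*}
\Lambda(f_j\otimes\chi_p,t+1)=i^k w_p\,\Lambda(f_j\otimes\chi_p,k-1-t),
\end{align*}
where $w_p=\tau(\chi_p)^2/p=\chi_p(-1)$ does not depend on $j$. Unfolding the completed $L$-functions,
\begin{align*}
L(f_j,\chi_p,t+1)=C_{p,t,k}\,L(f_j,\chi_p,k-1-t),
\end{align*}
where $C_{p,t,k}=i^k w_p\,(p/2\pi)^{k-2-2t}\,\Gamma(k-1-t)/\Gamma(t+1)$ is nonzero and independent of $j$. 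By linearity in $f$,
\begin{align*}
L(f,\chi_p,t+1)=C_{p,t,k}\,L(f,\chi_p,k-1-t),
\end{align*}
so $L(f,\chi_p,k-1-t)=0$ for all $p\in\mathscr{P}_{4,b}$. Since $t\le\frac{k}{2}-2$, we have $k-1-t\ge\frac{k}{2}+1>\frac{k+1}{2}$, and Theorem \ref{thm:twistvanishing} again yields $f=0$.

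The main point requiring care is this reflection step. The functional equation \eqref{eq:functionaleqeigenform} holds only for eigenforms, so we must check that the proportionality constant is independent of $j$. It is, because the root number $\tau(\chi_p)^2/p$ depends only on $\chi_p$ and the level $p^2$ is common to all $f_j$. This is also why the excluded value $t=\frac{k}{2}-1$ (the central point) cannot be treated: both $t+1$ and $k-1-t$ equal $\frac{k}{2}$, which lies outside the region $\Re(s)>\frac{k+1}{2}$.

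Finally, \eqref{eq:ker} says the functionals $r_{t,\chi_p}$ have no common nonzero zero in $S_k$. By finite-dimensional linear algebra, their span is the annihilator of $\bigcap_p\ker(r_{t,\chi_p})=0$, which is all of $S_k^{\ast}$.
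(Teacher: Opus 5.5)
Your proof is correct and follows the paper's argument. For $\frac{k}{2}\le t\le k-2$ you apply Theorem \ref{thm:twistvanishing} directly. For $0\le t\le \frac{k}{2}-2$ you reflect via the functional equation \eqref{eq:functionaleqeigenform}, using $\chi_p=\overline{\chi_p}$ and an eigenform-independent constant so the relation extends linearly to all of $S_k$; your explicit $C_{p,t,k}$ just spells out the step the paper states in one line.
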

\begin{proof}
The first statement for $\frac{k}{2}\leq t\leq k-2$ follows immediately from Theorem \ref{thm:twistvanishing}.  Note that $\chi_p=\overline{\chi}_p$ and $i^k\frac{\tau(\chi_p)^2}{p}$ is independent of the Hecke eigenforms in $S_k$. Then the functional equation \eqref{eq:functionaleqeigenform} extends by linearity to every $f\in S_k$, and thus \eqref{eq:ker} holds for $0\leq t\leq \frac{k}{2}-2$ as well.  The second statement follows from the fact that $\dim S_k<\infty$.
\end{proof}
We conclude the paper by raising two questions. First,  note that $|\mathcal{P}_{4, \pm1}|=\infty$, so a natural question is whether we can find an explicit finite spanning subset. In the spirit of Lemma \ref{lem:arithmeticprogression} and the proof of  Theorem \ref{thm:twistvanishing}, we ask the following question.
\begin{Question}Fix $b\in\{1,3\}$ and an integer $t$ such that $0\leq t\leq \frac{k}{2}-2$ or $\frac{k}{2}\leq t\leq k-2$.
Can we  find an effective number $X=X(k,t)$ such that
$$\bigcap_{p\leq X,~p\in\mathscr{P}_{4,b}}\ker (r_{t,\chi_p}) = 0\quad?$$
\end{Question}
Secondly, we ask the following question about linear independence, which differs from the settings \hyperref[item:caseA]{$(C_t)$} and \hyperref[item:caseB]{$(C_{\chi})$}.  
\begin{Question}
    Fix $b\in\{1,3\}$, an integer $t\geq 0$, and 
    primes $p_1<p_2<\cdots< p_d$ in the residue class $b\pmod 4$. Can we find an effective number 
    $Y=Y(t,p_1,\dots,p_d)$
    such that $r_{t,\chi_{p_1}},\dots,r_{t,\chi_{p_d}}\in S_k^{\ast}$ are linearly independent for every even integer $k > \max(Y, 2t+2)$? 
\end{Question}

\section*{Acknowledgements}
The author would like to thank Hui Xue for the discussion. 

\bibliographystyle{plain}
  
  \vspace{.2in}

\providecommand{\bysame}{\leavevmode\hbox
to3em{\hrulefill}\thinspace}

\bibliography{ref}

\end{document}